\newtheorem{theorem}{Theorem}[section]
\newtheorem{remark}[theorem]{Remark}
\newtheorem{lemma}[theorem]{Lemma}
\newtheorem{corollary}[theorem]{Corollary}
\theoremstyle{definition}
\newtheorem{definition}[theorem]{Definition}
\newtheorem{example}[theorem]{Example}
\newtheorem{question}[theorem]{Question}
\newtheorem{algo}[theorem]{Algorithm}
\title[Long and Short Periodic Trajectories in the Pentagon]{Long and Short Periodic Billiard Trajectories \\ in the Regular Pentagon}
\author{Samuel Everett, Vanessa Lin, Aidan Mager}
\begin{document}

\begin{abstract}
    In any periodic direction on the regular pentagon billiard table, there exists two combinatorially different billiard paths, with one longer than the other. For each periodic direction, McMullen asked if one could determine whether the periodic trajectory through a given point is long, short, or a saddle connection. In this paper we present an algorithm resolving this question for trajectories emanating from the midpoints of the pentagon.
\end{abstract}

\maketitle

\section{Introduction}
The theory of polygonal billiards concerns the uniform motion of a point mass (billiard ball) in a polygonal plane domain (billiard table). We define collisions with the boundary to be elastic: the angle of incidence equals the angle of reflection. It is natural to consider the behavior of periodic billiard orbits in polygons. Indeed, a long-standing open problem in polygonal billiards is whether every polygon contains a periodic billiard orbit (see \cites{gutkin, gutkint} for surveys).  Much work has been devoted to this particular problem, leading to significant progress (see e.g., \cites{masur, schwartz, galperin}), although many questions remain unanswered.

\begin{figure}[h]
    \begin{subfigure}{0.5\textwidth}
        \centering
        \includegraphics[scale=.5]{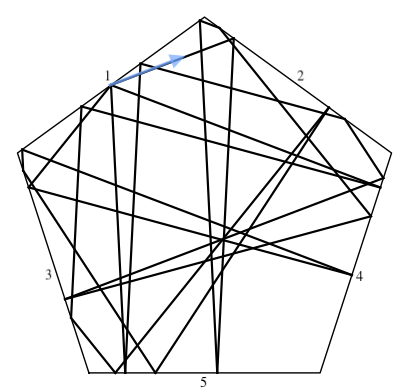}
    \end{subfigure}%
    \begin{subfigure}{0.5\textwidth}
        \centering
        \includegraphics[scale=.5]{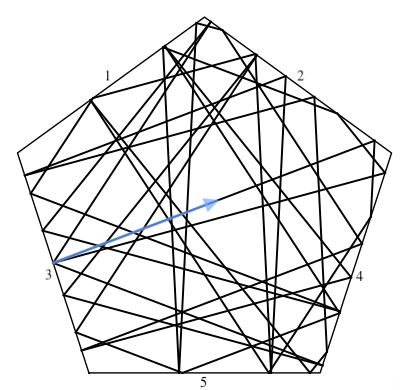}
    \end{subfigure}
    \caption{A short trajectory (left) and a long trajectory (right) in the regular pentagon billiard table beginning in the periodic direction 132.}
    \label{fig:pent_trajs}
\end{figure}

One fruitful line of study has come from billiards in polygons whose interior angles are rational multiples of $\pi$ (rational polygons), and their correspondence with straight line flow on translation surfaces via \textit{unfolding} (see \cites{wrightsurv, MasurTabach, billiards} for review). This correspondence has lead to much development in both billiards and translation surfaces. For instance, Masur in \cite{masur} used this correspondence to show that every rational polygon admits countably many periodic billiard trajectories.

It is also interesting to characterize the behavior of periodic billiard orbits in particular classes of polygons (see e.g. \cites{dft, dl}).  In \cite{dl}, Davis and Leli\`evre study the rich behavior of periodic billiard trajectories in the regular pentagon by using the correspondence of the regular pentagon billiard table with the double pentagon and golden L translation surfaces. Their work demonstrated how any periodic billiard trajectory in the regular pentagon is either long, short, or hits a corner. To illustrate, a short trajectory and a long trajectory for a particular periodic direction is shown in Figure \ref{fig:pent_trajs}. As a consequence, McMullen \cite{mcmullenQ} asked if given a periodic direction and midpoint of an edge, is there a way to determine if the trajectory is long, short, or hits a corner without having to explicitly draw the trajectory? The following theorem proved in this paper resolves this question.

\begin{theorem} \label{thm2}
Let $\vec{v}$ be a periodic billiard direction in the regular pentagon billiard table. For any choice of midpoint on a side, Algorithm \ref{alg:MainAlgorithm} gives a process to determine if the billiard trajectory in the direction of $\vec{v}$ is long, short, or hits a corner point.
\end{theorem}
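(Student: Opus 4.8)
The plan is to pass from the pentagon billiard to straight-line flow on a translation surface and then exploit the Veech (lattice) property of that surface to reduce every periodic direction to a single normal form. First I would unfold the regular pentagon in the standard way, obtaining the double-pentagon translation surface (equivalently, after an affine change of coordinates, the golden L). Under unfolding a billiard trajectory becomes a straight geodesic, the chosen edge-midpoint lifts to a specific marked point, and a periodic billiard direction $\vec{v}$ becomes a direction in which the surface admits a cylinder decomposition. By the result of Davis and Leli\`evre quoted above, in each such direction the periodic geodesics fall into exactly two combinatorial classes together with a finite set of saddle connections; the long/short dichotomy is then precisely the question of which cylinder of the decomposition the lifted point lands in, while ``hits a corner'' is the statement that the lifted point lies on a saddle connection bounding two cylinders.

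Next I would use the fact that the double pentagon is a Veech surface, so its Veech group is a lattice, namely the Hecke triangle group, generated by the order-$5$ rotation coming from the pentagon's symmetry together with a parabolic shear fixing a standard cylinder direction. The key structural input is that this group acts transitively on periodic directions up to symmetry, and that there is an associated renormalization, a continued-fraction algorithm built from the golden ratio, which, given the combinatorial code of $\vec{v}$ (such as the label $132$ in Figure \ref{fig:pent_trajs}), produces a finite word $g_1 g_2 \cdots g_k$ in the generators carrying $\vec{v}$ to a fixed base direction $\vec{v}_0$ whose cylinder decomposition is completely explicit. This renormalization is the engine of Algorithm \ref{alg:MainAlgorithm}: each letter records one elementary move, and the algorithm terminates because a periodic direction has an eventually periodic expansion in this system.

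The classification step then proceeds in the base direction. For $\vec{v}_0$ the decomposition into a wide (``long'') cylinder and a narrow (``short'') cylinder, separated by explicit saddle connections, can be written down once and for all, so membership of a point is decided by a single inequality on its transverse coordinate. To classify the original midpoint I would track its image under the composite affine map $g_1 \cdots g_k$: apply the same sequence of elementary moves to the coordinates of the lifted midpoint, and at the end test which region of the base decomposition the transported point occupies. Reading long/short/corner off the base case and pulling the answer back through the invertible affine maps yields the classification of the original trajectory.

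I expect the main obstacle to be the bookkeeping of the marked point through the renormalization, in particular making the correspondence between ``the transverse coordinate lands on a saddle connection'' and ``the billiard trajectory hits a corner'' exact, including the degenerate cases where the midpoint sits on a cylinder boundary at some intermediate stage. A secondary difficulty is verifying that the symbolic renormalization is well defined and terminating on every periodic direction, and that the two combinatorial classes produced by the algorithm genuinely coincide with the long and short trajectories of Davis--Leli\`evre rather than being merely some arbitrary labelling of the two cylinders.
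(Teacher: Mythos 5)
Your outline follows the same route as the paper: unfold to the double pentagon, pass to the golden L, use the Davis--Leli\`evre tree word to renormalize the periodic direction to the horizontal one, track the lifted midpoint, and read off long/short/saddle from the explicit horizontal cylinder decomposition. However, there is a genuine gap at exactly the step you yourself flag as ``the main obstacle'': you never explain how to track the marked point through the renormalization. Each $\sigma_i^{-1}$ maps the golden L to a sheared polygon that must be cut and pasted back onto the golden L, so ``apply the same sequence of elementary moves to the coordinates of the lifted midpoint'' is not yet a well-defined finite procedure --- the image of the point depends on those identifications, and you give no mechanism for deciding, exactly rather than numerically, which cylinder (or boundary saddle connection) the transported point occupies after $n$ such steps.

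The paper closes this gap with one structural fact that is absent from your proposal: the five edge midpoints lift precisely to the Weierstrass points of the golden L, which are the only periodic points under the Veech group, so every Veech group element permutes this five-point set among itself. Computing the permutation once for each generator (Lemma \ref{prop:permutation}) gives products of disjoint transpositions $\tau_0, \dots, \tau_3 \in S_5$; since these are involutions, each inverse $\sigma_i^{-1}$ induces the same permutation $\tau_i$ (Remark \ref{rem:order}). The entire point-tracking therefore collapses to multiplication in $S_5$: midpoint $J$ ends at $\tau_{k_1}\cdots\tau_{k_n}(J)$, which is again one of the five labeled points, and Figure \ref{horizL} decides its class in the horizontal direction. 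This is not a cosmetic simplification --- it is the content of Algorithm \ref{alg:MainAlgorithm}, the very object the theorem asserts to work, and it also dissolves your worry about ``degenerate cases where the midpoint sits on a cylinder boundary at some intermediate stage'': the tracked point is always one of the five Weierstrass points, so no intermediate degeneracy can occur. Until your proposal supplies this lemma (or an equivalent exact bookkeeping device), it is a plan whose central step is missing rather than a proof.
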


The proof of Theorem \ref{thm2} is an algorithm which translates the problem of billiards on the pentagon to straight-line flow on the golden L translation surface, and determines if the associated trajectory is in a long or short cylinder.  

This paper is organized as follows.  In \S \ref{sec1} we provide background definitions and theory necessary for stating the proof of Theorem \ref{thm2}, which we give in \S \ref{sec2}. Finally, a corollary that often reduces the steps needed in the algorithm is explored in \S \ref{sec3}.

\section{Preliminaries}\label{sec1}

\subsection{Translation surfaces and Veech groups}
Billiards are characterized by their elastic reflection off the boundary of the table according to the mirror law of reflection. This allows us to relate billiard dynamics to linear flow on translation surfaces.
In particular, reflecting across the edges of the table until each edge is paired with an opposite parallel edge gives rise to an equivalent representation on a translation surface.
These surfaces are typically easier to study because it allows us to restrict our attention to a single fixed direction.

We begin by giving the definition of translation surfaces and their corresponding properties.  For a complete review of translation surfaces see \cites{wrightsurv, MasurTabach}.  

\begin{definition}
A \emph{translation surface}, denoted $(X, \omega)$, is a disjoint union of polygons in $\mathbb{C}$ with opposite, parallel sides identified by translation.  In particular, a translation surface is embedded in $\mathbb{C}$ with embedding fixed only up to translation.  In the polygonal representation, we consider translation surfaces to be equivalent if we can achieve one from the other through cut-and-paste operations.
\end{definition}

\begin{remark}
The notation $(X, \omega)$ of a translation surface comes from an equivalent definition of a translation surface.  Namely, a translation surface $(X, \omega)$ is a nonzero Abelian differential $\omega$ on a Riemann surface $X$.
\end{remark}

The vertices of the polygons defining a translation surface are called \textit{cone points}, and the segments joining pairs of cone points without any cone points in the interior are called \textit{saddle connections}. Trajectories that hit the corner of a given polygonal billiard table correspond to saddle connections in the translation surface obtained via unfolding said table.

A \textit{cylinder} of a translation surface is a maximal family of periodic trajectories (closed geodesics) in a periodic direction, with boundaries given by saddle connections.  See Figure \ref{DPSurf} for visual demonstration of cylinders.

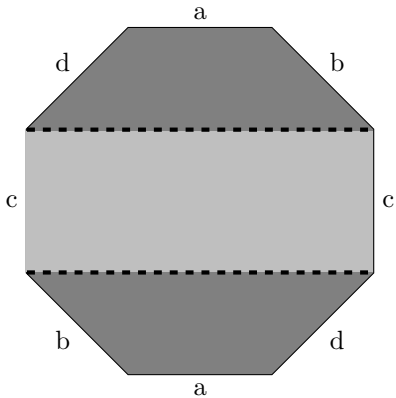
\begin{figure}[h]
    \centering
    \begin{tikzpicture}[scale=1]
        \node[regular polygon,regular polygon sides=8, minimum size=5cm, draw, fill=gray] at (0,0) {};
        \node[rectangle, fill=lightgray, minimum width = 4.6cm, 
        minimum height = 1.87cm] (r) at (0,0) {};
        \draw[ultra thick, dashed] (-2.3, 0.95)--(2.3, 0.95); 
        \draw[ultra thick, dashed] (-2.3, -0.95)--(2.3, -0.95);
        \draw (2.3, 0) node[anchor=west] {c};
        \draw (-2.3, 0) node[anchor=east] {c};
        \draw (-1.6, 1.6) node[anchor=south east] {d};
        \draw (1.6, 1.6) node[anchor=south west] {b};
        \draw (1.6, -1.6) node[anchor=north west] {d};
        \draw (-1.6, -1.6) node[anchor=north east] {b};
        \draw (0, 2.3) node[anchor=south] {a};
        \draw (0, -2.3) node[anchor=north] {a};
    \end{tikzpicture}
    \caption{The cylinder decomposition of the regular octagon translation surface in the horizontal direction. A longer cylinder (dark gray) and a shorter cylinder (light gray) are separated by saddle connections (black dashes).}
    \label{DPSurf}
\end{figure}

Translation surfaces admit an action by $\operatorname{SL}_2\mathbb{R}$.  If $g \in \operatorname{SL}_2\mathbb{R}$ and $(X, \omega)$ is a translation surface given as a collection of polygons, then $g(X, \omega)$ is the translation surface obtained by acting linearly by $g$ on the polygons determining $(X, \omega)$.

\begin{definition}
We denote the stabilizer of $(X, \omega)$ under the action of $\operatorname{SL}_2\mathbb{R}$ by $\operatorname{SL}(X, \omega)$.  The \emph{Veech Group} of $(X, \omega)$ is the image of $\operatorname{SL}(X, \omega)$ in $\operatorname{PSL}_2\mathbb{R}$.  If $\operatorname{SL}(X, \omega)$ is a lattice, then $(X, \omega)$ is a \emph{Veech surface}.  
\end{definition}

\subsection{Regular pentagon billiard table and double pentagon surface} \label{sec:unfoldingToDoublePent}
Straight line flow on a translation surface corresponds to billiard flow in a polygon through \textit{unfolding}. That is, instead of reflecting the billiard off a side of a billiard polygon, reflect the polygon on a side and unfold the trajectory to a straight line, as shown in Figure \ref{unfolding}.  Reflecting polygons in such a manner can generate translation surfaces, and hence we obtain a correspondence between billiard flow in a polygon and straight line flow on a translation surface.  For a detailed description of unfolding a billiard into straight line flow on a translation surface, see \cite[\S 1.3]{MasurTabach}.

The regular pentagon table can be unfolded to the \textit{necklace}, a 5-fold cover of the double pentagon translation surface.  As a consequence, we can examine straight-line flow on the double pentagon translation surface to understand billiard dynamics in the regular pentagon.

\begin{figure}
    \centering
    \includegraphics[scale=.23]{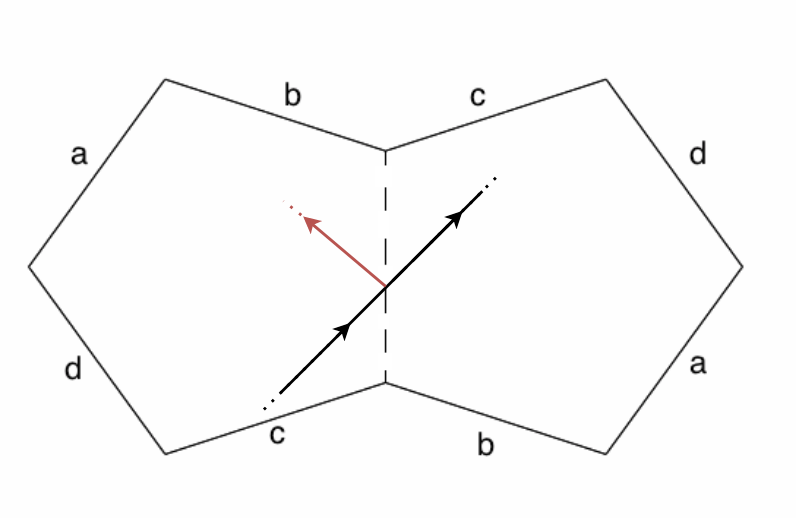}
    \caption{An unfolding of a billiard trajectory in the regular pentagon table to a flow in the double pentagon surface. The red trajectory shows the billiard trajectory reflected off the edge.}
    \label{unfolding}
\end{figure}

\subsection{Going from the double pentagon to the golden L}
\begin{figure}[h]
    \centering
    \begin{tikzpicture}[scale=0.86]
        \draw (0,0) -- (1.61803, 0) -- (1.61803^2, 0) -- (1.61803^2, 1.61803) -- (1.61803, 1.61803) -- (1.61803, 1.61803^2) -- (0, 1.61803^2) -- (0, 1.61803) -- (0,0);
        \draw (-0.1, 1.61803) -- (0.1, 1.61803);
        \draw (1.61803, -0.1) -- (1.61803, 0.1);
        \draw (0, 1.61803/2) node[anchor=east] {b};
        \draw (1.61803^2, 1.61803/2) node[anchor=west] {b};
        \draw (0, 1.61803 + 1/2) node[anchor=east] {a};
        \draw (1.61803, 1.61803 + 1/2) node[anchor=west] {a};
        \draw (1.61803/2, 1.61803^2) node[anchor=south] {c};
        \draw (1.61803/2, 0) node[anchor=north] {c};
        \draw (1.61803 + 1/2, 0) node[anchor=north] {d};
        \draw (1.61803 + 1/2, 1.61803) node[anchor=south] {d};
    \end{tikzpicture}  
    \caption{The golden L with edge identifications labeled.}
    \label{fig:goldenL}
\end{figure}
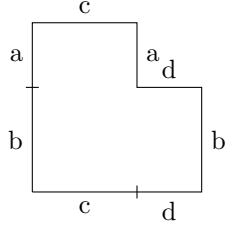
Recall the golden ratio $\phi$, defined to be
$$\phi = \frac{1+\sqrt{5}}{2} \approx 1.618$$
and is obtained as one of the solutions of $x^2 - x - 1 = 0$. Hence, it satisfies a useful identity: $\phi^2 = \phi +1$. We construct the \textit{golden L} by taking a $\phi \times \phi$ square, gluing two $1\times \phi$ rectangles to adjacent sides, and identifying opposite and parallel sides (see Figure \ref{fig:goldenL}). We construct the \textit{double pentagon} by gluing two pentagons together and identifying opposite and parallel sides, as in Figure \ref{unfolding}. Note this constructions is equivalent to the double pentagon obtained from the unfolding in \S \ref{sec:unfoldingToDoublePent} (see \cite[\S 2.1]{dl} for further information on the double pentagon). As in \cite[Definition 2.2]{dl}, let $$P = \begin{pmatrix} 1 & \cos{\pi/5} \\ 0 & \sin{\pi/5} \end{pmatrix}.$$

\begin{lemma} \cite[Lemma 2.3]{dl} \label{lem:LongShortConnec}
The matrix $P$ takes the golden L surface to the double pentagon surface, and its inverse $P^{-1}$ takes the double pentagon to the golden L. In particular, they take long cylinder vectors to long cylinder vectors, and the same for short cylinder vectors.
\end{lemma}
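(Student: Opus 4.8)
The plan is to split the statement into two parts: first the geometric claim that the linear map $P$ carries the golden L model to the double pentagon model (with $P^{-1}$ reversing this), and second the dynamical consequence that this identification respects the long/short dichotomy on cylinders.

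For the geometric part I would work directly with the explicit polygonal models of Figures \ref{fig:goldenL} and \ref{unfolding}. Writing $c = \cos(\pi/5) = \phi/2$ and $s = \sin(\pi/5)$, the map $P$ sends $(x,y) \mapsto (x + cy,\, sy)$, so it fixes the horizontal direction and shears the vertical direction $e_2$ onto the unit vector at angle $\pi/5$. Applying $P$ vertex by vertex to the golden L, each of its three constituent pieces (the $\phi \times \phi$ square and the two $1 \times \phi$ rectangles) becomes a parallelogram whose sides are horizontal and inclined at angle $\pi/5$. Since the interior angle of the regular pentagon is $3\pi/5$ and its edge directions are exactly the multiples of $\pi/5$, these parallelograms are the natural building blocks of the pentagons, and the golden-ratio proportions of the pieces are what make regular pentagons (whose diagonal-to-side ratio is $\phi$) appear. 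I would then exhibit the cut-and-paste that reassembles the sheared pieces into two regular pentagons, checking that the four edge identifications $a,b,c,d$ of the golden L are carried by $P$ to the pentagon edge gluings of Figure \ref{unfolding}. As $P$ is invertible, reading the same computation backwards shows $P^{-1}$ takes the double pentagon to the golden L.

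The main obstacle is precisely this cut-and-paste bookkeeping: one must verify that the three image parallelograms tile the double pentagon without overlap and that all translation identifications remain consistent. This is a finite but delicate check of vertex coordinates and edge vectors, and is the content of \cite[Lemma 2.3]{dl}, whose computation I would follow.

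For the second part I would argue abstractly, using that $P$ realizes a translation-surface isomorphism between the two models. Such an isomorphism carries the cylinder decomposition of the golden L in any periodic direction to the cylinder decomposition of the double pentagon in the image direction, taking core curves to core curves and saddle-connection boundaries to saddle-connection boundaries. In a fixed direction all the relevant holonomy vectors are parallel, so $P$ scales their lengths by the single common factor $\|Pu\|/\|u\|$ for any direction vector $u$; hence the ratio of the circumferences of the two cylinders in that direction is preserved. Because \emph{long} and \emph{short} are exactly the labels for the larger- and smaller-circumference cylinder in a given direction, this ratio-preservation forces long cylinders to map to long cylinders and short to short, and likewise for their associated cylinder vectors. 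The identical argument applied to $P^{-1}$ gives the reverse direction.
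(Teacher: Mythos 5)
This lemma is not proved in the paper at all: it is imported verbatim from \cite[Lemma 2.3]{dl}, so there is no internal argument to compare your attempt against. Judged on its own terms, your outline is sound and essentially reconstructs the cited source's proof. Your second part is the strongest piece: an affine map carries the cylinder decomposition in a given direction to the cylinder decomposition in the image direction, taking core curves to core curves and saddle connections to saddle connections, and since all holonomy vectors in a fixed direction are parallel, $P$ scales their lengths by the single factor $\|Pu\|/\|u\|$; hence which of the two cylinders has larger circumference is preserved. This is a complete and correct justification of the ``in particular'' clause, relying only on the two-cylinder decomposition of \cite[\S 2.1]{dl}, which the paper also quotes as background. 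Two caveats. First, your geometric part is a sketch rather than a proof: the computation that $P$ fixes horizontals and sends the vertical to the unit vector at angle $\pi/5$ (turning the square and two rectangles of the golden L into parallelograms with the right slopes and golden-ratio proportions) is correct, but the reassembly of those parallelograms into the two pentagons with matching edge identifications --- the actual content of the claim --- is explicitly deferred to \cite[Lemma 2.3]{dl}, i.e., to the very statement being proved; as a standalone proof it is therefore incomplete, though no more so than the paper's own treatment. Second, a terminological point: $P$ is an affine equivalence of translation surfaces, not a translation-surface isomorphism (it does not preserve holonomy vectors, only rescales them linearly); your argument uses only affine naturality of cylinders, so nothing breaks, but the wording should be adjusted.
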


\subsection{Periodic trajectories in the regular pentagon, the double pentagon, and the golden L} \label{sec:PerDirections}
We begin with the \text{golden L}, pictured in Figure \ref{fig:goldenL}. The following matrices generate the Veech group of the Golden L \cite[Lemma 2.6]{dl}:
\begin{center}
    $\sigma _0 = \begin{pmatrix} 1 & \phi \\ 0 & 1 \end{pmatrix}$, 
    $\sigma _1 = \begin{pmatrix} \phi & \phi \\ 1 & \phi \end{pmatrix}$, 
    $\sigma _2 = \begin{pmatrix} \phi & 1 \\ \phi & \phi \end{pmatrix}$, 
    $\sigma _3 = \begin{pmatrix} 1 & 0 \\ \phi & 1 \end{pmatrix}$
\end{center}
The images of each $\sigma_i$ acting on the golden L are pictured in Figures \ref{sigma0app} - \ref{sigma3app}. Since these are elements of the Veech group of the golden L, we know that they send the golden L to itself, up to a cut and paste. See Figure \ref{cutandpaste} for a visual demonstration of how the cut and paste operation works on the golden L after applying $\sigma_0$.  Note the cut and paste operation respects side identifications and the cut lines may become new side identifications. \par
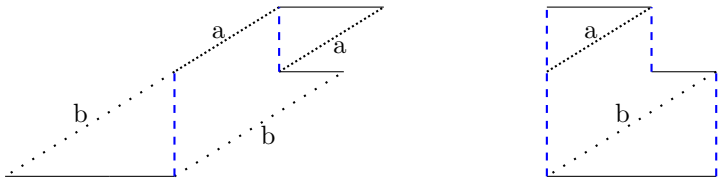
\begin{figure}[h]
    \begin{subfigure}[h]{0.45\textwidth}
        \centering
        \begin{tikzpicture}[scale=0.86]
            \draw (0,0) -- (1.61803, 0);
            \draw (1.61803, 0) -- (1.61803+1, 0);
            \draw[thick, loosely dotted] (1.61803+1, 0) -- (2*1.61803+2, 1.61803);
            \draw[thick, loosely dotted] (1.61803+1, 1.61803) -- (0,0);
            \draw (2*1.61803+2, 1.61803) -- (2*1.61803+1, 1.61803);
            \draw[thick, densely dotted] (2*1.61803+1, 1.61803) -- (3*1.61803+1, 1.61803+1);
            \draw[thick, densely dotted] (2*1.61803+1, 1.61803+1) -- (1.61803+1, 1.61803);
            \draw (3*1.61803+1, 1.61803+1) -- (2*1.61803+1, 1.61803+1);
            \draw[blue,thick,dashed] (1.61803+1, 0) -- (1.61803+1, 1.61803);
            \draw[blue,thick,dashed] (2*1.61803+1, 1.61803) -- (2*1.61803+1, 1.61803+1);
            \draw (2.5*1.61803 + 1, 1.61803 +1/2) node[anchor=north west, shift={(-1mm,1mm)}] {a};
            \draw (1.5*1.61803 + 1, 1.61803 +1/2) node[anchor=south east, shift={(1mm,-1mm)}] {a};
            \draw (1.5*1.61803 + 3/2, 1.61803/2) node[anchor=north west, shift={(-1mm,1mm)}] {b};
            \draw (1.61803/2 + 1/2, 1.61803/2) node[anchor=south east, shift={(1mm,-1mm)}] {b};
        \end{tikzpicture}  
    \end{subfigure}
    \begin{subfigure}[h]{0.45\textwidth}
        \centering
        \begin{tikzpicture}[scale=0.86]\textbf{}
            \draw[thick, loosely dotted] (1.61803+1, 1.61803) -- (0,0);
            \draw (0,0) -- (1.61803, 0) -- (1.61803+1, 0);
            \draw (1.61803+1, 1.61803) -- (1.61803, 1.61803);
            \draw[thick, densely dotted] (0, 1.61803) -- (1.61803, 1.61803+1);
            \draw (1.61803, 1.61803+1) -- (0, 1.61803+1);
            \draw[blue,thick,dashed] (1.61803+1, 0) -- (1.61803+1, 1.61803);
            \draw[blue,thick,dashed] (0, 0) -- (0, 1.61803+1);
            \draw[blue,thick,dashed] (1.61803, 1.61803) -- (1.61803, 1.61803+1);
            \draw (1.61803/2, 1.61803 +1/2) node[anchor=south east, shift={(1mm,-1mm)}] {a};
            \draw (1.61803/2 + 1/2, 1.61803/2) node[anchor=south east, shift={(1mm,-1mm)}] {b};
        \end{tikzpicture}  
    \end{subfigure}
    \caption{$\sigma_0$ acting on the golden L before and after cutting and pasting along the blue dotted lines.}
    \label{cutandpaste}
\end{figure}
Next, consider a trajectory within the double pentagon. We say a trajectory in a translation surface is a \textit{periodic trajectory} if it forms a closed curve. In other words, the trajectory eventually comes back to where it started and repeats. It was shown by Davis-Leli\`evre in \cite{dl} that periodic trajectories in the golden L are precisely those with slope in $\mathbb{Q}[\sqrt{5}]$.
The following theorem from \cite{dl} gives us a way to express any periodic direction as a product of finitely many $\sigma_i$'s and the horizontal vector $(\begin{smallmatrix}
      1\\
      0
    \end{smallmatrix}\big)$:

\begin{theorem} \cite[Theorem 2.22]{dl} \label{thm:PerDirec}
    Corresponding to any periodic direction vector $\vec{v}$ in the first quadrant on the golden L is a unique sequence $a_v=(k_1, k_2, \dots, k_n)$ of sectors such that $\vec{v} = \ell_v \sigma_{k_n}\dots \sigma_{k_1}\big(\begin{smallmatrix}
      1\\
      0
    \end{smallmatrix}\big)$ for some length $\ell_v$.
\end{theorem}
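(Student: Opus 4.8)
The plan is to recast the statement as a fact about the action of the Veech-group generators on directions, measured by slope in $[0,\infty]$, and to extract the sequence $(k_1,\dots,k_n)$ by a greedy, continued-fraction-style reduction of $\vec v$ toward the horizontal ray $\big(\begin{smallmatrix}1\\0\end{smallmatrix}\big)$. The first step is to record how each $\sigma_i$ moves the two boundary rays of the first quadrant $Q$. A direct computation, using $\phi^2=\phi+1$ and $\det\sigma_i=1$, gives that $\sigma_0$ fixes the horizontal ray while
\[
\sigma_0\begin{pmatrix}0\\1\end{pmatrix}=\begin{pmatrix}\phi\\1\end{pmatrix}=\sigma_1\begin{pmatrix}1\\0\end{pmatrix},\qquad \sigma_1\begin{pmatrix}0\\1\end{pmatrix}=\begin{pmatrix}\phi\\\phi\end{pmatrix}=\sigma_2\begin{pmatrix}1\\0\end{pmatrix},
\]
and likewise $\sigma_2\big(\begin{smallmatrix}0\\1\end{smallmatrix}\big)=\big(\begin{smallmatrix}1\\\phi\end{smallmatrix}\big)=\sigma_3\big(\begin{smallmatrix}1\\0\end{smallmatrix}\big)$, while $\sigma_3$ fixes the vertical ray. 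Consequently the four generators carry $Q$ onto four sub-cones $\Sigma_0,\Sigma_1,\Sigma_2,\Sigma_3$ with pairwise disjoint interiors and union $Q$, separated by the rays of slope $0,\tfrac1\phi,1,\phi,\infty$. This is exactly the ``sector'' structure the statement refers to, and it is what will let me both read off an index and guarantee it is forced.

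For existence I would argue by reduction. Given a periodic $\vec v$ in the interior of $Q$, it lies in a unique sector $\Sigma_{k}$, and since $\sigma_k$ maps $Q$ bijectively onto $\Sigma_k$, the vector $\sigma_k^{-1}\vec v$ again lies in $Q$; it is still periodic because the $\sigma_i$ lie in the Veech group, which preserves periodic directions. Iterating produces $\vec v\mapsto\sigma_{k_n}^{-1}\vec v\mapsto\cdots$, peeling off one index at each stage, and the claim is that this halts precisely at the horizontal direction, after which reversing the recorded indices yields $\vec v=\ell_v\,\sigma_{k_n}\cdots\sigma_{k_1}\big(\begin{smallmatrix}1\\0\end{smallmatrix}\big)$. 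To see that the process terminates I would introduce a height on periodic rays: choosing a primitive representative $(x,y)\in\mathbb{Z}[\phi]^2$, one checks on each sector that the corresponding inverse strictly shrinks a suitable complexity—e.g.\ in $\Sigma_0$ the defining inequality $\phi y\le x$ forces $\sigma_0^{-1}(x,y)=(x-\phi y,\,y)$ to have strictly smaller first coordinate—and one packages these into a single function valued in a discrete, bounded-below set.

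Uniqueness is then cheap given the sector picture. Because the interiors of the $\Sigma_i$ are disjoint, the outermost index $k_n$ is determined by the unique sector containing $\vec v$; the reduced ray $\sigma_{k_n}^{-1}\vec v$ is thereby determined as well, and an induction on the height (equivalently on $n$) shows the remaining indices are forced. The only genuine care is the convention on shared sector boundaries, namely the rays of slope $\tfrac1\phi,1,\phi$: each such ray is the image of the horizontal under a single generator ($\sigma_1,\sigma_2,\sigma_3$ respectively), so it should be assigned to the sector for which the reduction terminates in one step, and fixing this half-open convention makes the decomposition a genuine partition and the sequence unambiguous.

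The main obstacle is the termination/height argument, which is more delicate than its analogue over $\mathbb{Z}$ because the coordinates live in $\mathbb{Z}[\phi]$: a naive Euclidean size need not decrease, since multiplying by $\phi$ can enlarge one Galois embedding while shrinking the other. I expect the clean route is to track the pair $(\vec v,\vec v')$ consisting of a ray and its Galois conjugate—equivalently, to use the contracting/expanding behavior of the $\sigma_i$ on the surface and on its conjugate—and to build a Lyapunov-type height from both embeddings so that strict decrease is forced on a discrete value set. An alternative, more conceptual framing that sidesteps the explicit height is hyperbolic: the $\sigma_i$ generate a lattice acting on $\mathbb{H}$, the periodic directions are its cusps, and the reduction is a descent through a Farey-type tessellation whose finiteness follows from the lattice having a finite-area fundamental domain. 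I would use whichever of the two makes the strict-decrease step shortest.
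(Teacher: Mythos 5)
A preliminary remark on the comparison itself: the paper does not prove this statement at all---it is imported verbatim from Davis--Leli\`evre \cite{dl} (their Theorem 2.22), so there is no internal proof to measure your argument against; what you have written is a reconstruction of the argument that lives in the cited paper. Your skeleton is the correct and standard one, and it matches the framework of \cite{dl}: the computation that $\sigma_0,\dots,\sigma_3$ carry the first quadrant onto the four sub-cones bounded by the rays of slope $0$, $1/\phi$, $1$, $\phi$, $\infty$ is right; the greedy reduction $\vec v \mapsto \sigma_k^{-1}\vec v$ is exactly how tree words arise; and uniqueness does follow from disjointness of the sector interiors once a boundary convention is fixed. (One edge case your convention cannot absorb: the vertical ray. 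It is a periodic direction, $\sigma_3^{-1}$ fixes it, and it is not in the positive-semigroup orbit of the horizontal at all, so ``first quadrant'' in the statement must be read so as to exclude it.)

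The genuine gap is the step you yourself flag and postpone: termination. This is not a technicality---it is the entire content of the theorem, the analogue of ``every rational number has a finite continued fraction expansion,'' with everything else being elementary linear algebra. You are right that naive descent fails: $\mathbb{Z}[\phi]$ is dense in $\mathbb{R}$, so a strictly decreasing positive quantity need not reach a minimum, and no single-embedding height can work. But neither of your two proposed repairs is carried out, and neither is as short as suggested. The two-Galois-embedding height, done correctly, amounts to Leutbecher's theorem that every element of $\mathbb{Q}(\sqrt{5}) \cup \{\infty\}$ is a cusp of the Hecke group $H_5$; this is genuinely special to $\phi$ (the analogous statement fails for general Hecke groups), so the arithmetic of $\mathbb{Z}[\phi]$ must enter in an essential way, and producing a height that strictly decreases through a discrete value set is precisely the hard work. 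The hyperbolic route as you state it---``finiteness follows from the lattice having a finite-area fundamental domain''---is not a proof: finite covolume alone says nothing about the greedy algorithm. What is actually needed is (i) the Veech dichotomy, to identify periodic directions with parabolic fixed points of the Veech group; (ii) the fact that this group is a $(2,5,\infty)$ triangle group whose quotient has a single cusp, so that all parabolic fixed points form one orbit, namely the orbit of the horizontal; and (iii) an argument that the greedy sector expansion of a point of this orbit is finite, for instance the geodesic-crossing (Farey-fan) argument or an induction on word length in the generators. Until one of these two routes is executed in full, the existence of the finite word $a_v$ remains unproved.
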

We refer to this sequence of integers $k_1 k_2 \dots k_n$, $k_j \in (0,1,2,3)$, as a \textit{tree word}. See Example \ref{ex:treeword} below for an example of calculating the periodic direction vector from a tree word.

\begin{example}
\label{ex:treeword}
    The vector $\vec{v}$ in the direction $132$ is given by
    \begin{align*}
        v &= \sigma_2 \cdot \sigma_3 \cdot \sigma_1 \cdot \begin{pmatrix} 1\\ 0 \end{pmatrix} \\
        &= \begin{pmatrix} \phi & 1 \\ \phi & \phi \end{pmatrix} \cdot \begin{pmatrix} 1 & 0 \\ \phi & 1 \end{pmatrix} \cdot \begin{pmatrix} \phi & \phi \\ 1 & \phi \end{pmatrix} \cdot \begin{pmatrix} 1\\ 0 \end{pmatrix} \\
        &= \begin{pmatrix} 2\phi^2 + 1 \\ 2\phi^2 + 2\phi \end{pmatrix} 
        = \begin{pmatrix} 2\phi + 3 \\ 4\phi + 2 \end{pmatrix} \tag{Recall $\phi^2 = \phi +1$}.
    \end{align*}
\end{example}
Using the following corollary, we can jump between periodic directions in the double pentagon to periodic directions in the golden L:
\begin{corollary} \cite[Corollary 2.4]{dl} \label{cor:PinvIsPer}
    A direction $\vec{v}$ is periodic on the double pentagon if and only if the direction $P^{-1}\vec{v}$ is periodic on the golden L.
\end{corollary}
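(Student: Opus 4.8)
The plan is to read Lemma \ref{lem:LongShortConnec} as the statement that $P$ is a linear bijection carrying the golden L translation surface onto the double pentagon translation surface, and then to invoke the general principle that the linear action of $\operatorname{GL}_2^{+}\mathbb{R}$ on translation surfaces preserves the property of a direction being periodic. Concretely, I would first record how $P$ acts on trajectories: since $P$ sends a point $z$ to $Pz$, a straight-line trajectory on the golden L in direction $\vec{u}$ is carried to a straight-line trajectory on the double pentagon in direction $P\vec{u}$, and the side identifications are respected by Lemma \ref{lem:LongShortConnec}, so $P$ descends to a bijection of the two surfaces that takes closed geodesics to closed geodesics and saddle connections to saddle connections.

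Next I would use the characterization of a periodic direction in terms of cylinder decomposition: a direction is periodic on a (Veech) surface precisely when the surface decomposes into finitely many cylinders in that direction, every other trajectory being a saddle connection. Because the image of a cylinder under the linear map $P$ is again a cylinder, with core curves in direction $\vec{u}$ mapping to core curves in direction $P\vec{u}$, a cylinder decomposition of the golden L in direction $\vec{u}$ is carried by $P$ to a cylinder decomposition of the double pentagon in direction $P\vec{u}$, and conversely via $P^{-1}$. Hence $\vec{u}$ is periodic on the golden L if and only if $P\vec{u}$ is periodic on the double pentagon.

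To conclude, set $\vec{v} = P\vec{u}$, equivalently $\vec{u} = P^{-1}\vec{v}$. The biconditional just established becomes: $P^{-1}\vec{v}$ is periodic on the golden L if and only if $\vec{v}$ is periodic on the double pentagon, which is exactly the claim.

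The main point requiring care, and the only real content beyond bookkeeping, is that $P$ genuinely induces a well-defined map between the two surfaces intertwining the straight-line flows, so that a closed curve on one side corresponds to a closed curve on the other; this is precisely what Lemma \ref{lem:LongShortConnec} supplies, so I would simply cite it. I would also flag one harmless subtlety: $P \notin \operatorname{SL}_2\mathbb{R}$ since $\det P = \sin(\pi/5) \neq 1$, so $P$ rescales area, but area scaling is irrelevant to whether orbits close up, so periodicity is unaffected and the argument goes through for the full $\operatorname{GL}_2^{+}\mathbb{R}$ action.
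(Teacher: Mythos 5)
Your proof is correct, and it is essentially the intended argument: the paper itself gives no proof of this statement (it is quoted directly from \cite{dl}), and there the corollary follows from exactly the fact you use, namely that $P$ gives an affine equivalence from the golden L to the double pentagon, so that straight-line trajectories in direction $\vec{u}$ correspond to straight-line trajectories in direction $P\vec{u}$ and closed geodesics, cylinders, and saddle connections are carried to one another. Your closing remark that $\det P = \sin(\pi/5) \neq 1$ is a worthwhile precision: periodicity is invariant under the full $\operatorname{GL}_2^{+}\mathbb{R}$ action, not just $\operatorname{SL}_2\mathbb{R}$, so the argument goes through unchanged.
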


\subsection{Weierstrass points and the Veech group}
\label{weierstrass}
Next, we inscribe a pentagon within the golden L as pictured in Figure \ref{fig:goldLWP}, and label the midpoints of the sides as shown. These midpoints coincide with the \emph{Weierstrass points}, which are also the midpoints of the edges in the double pentagon. Moreover, these are the labeled points in the regular pentagon as in Figure \ref{fig:goldLWP}.

\begin{figure}[h]
    \centering
    \begin{subfigure}{0.4\textwidth}
        \centering
        \begin{tikzpicture}[scale=0.86]
            \draw (0,0) -- (1.61803, 0) -- (1.61803^2, 0) -- (1.61803^2, 1.61803) -- (1.61803, 1.61803) -- (1.61803, 1.61803^2) -- (0, 1.61803^2) -- (0, 1.61803) -- (0,0);
            \draw [red] (1.61803, 0) -- (1.61803^2, 0) -- (1.61803, 1.61803) -- (0, 1.61803^2) -- (0, 1.61803) -- (1.61803, 0);
            \filldraw[red] (0, 1.61803+0.5) circle (2pt) node[anchor=west] {1};
            \filldraw[red] (1.61803/2, 1.61803+0.5) circle (2pt) node[anchor=west] {2};
            \filldraw[red] (1.61803/2, 1.61803/2) circle (2pt) node[anchor=west] {3};
            \filldraw[red] (1.61803+0.5, 1.61803/2) circle (2pt) node[anchor=west] {4};
            \filldraw[red] (1.61803+0.5, 0) circle (2pt) node[anchor=south] {5};
        \end{tikzpicture}  
    \end{subfigure}
    \begin{subfigure}{0.4\textwidth}
        \centering
        \begin{tikzpicture}[scale=0.86]
            \node[regular polygon,regular polygon sides=5, minimum size=2.6cm, draw] at (0,0) {};
            \filldraw[red] (-0.77, 0.94) circle (2pt) node[anchor=west] {1};
            \filldraw[red] (0.77, 0.94) circle (2pt) node[anchor=east] {2};
            \filldraw[red] (1.16, -0.4) circle (2pt) node[anchor=east] {4};
            \filldraw[red] (-1.16, -0.4) circle (2pt) node[anchor=west] {3};
            \filldraw[red] (0, -1.2) circle (2pt) node[anchor=south] {5};
        \end{tikzpicture}
    \end{subfigure}
    \caption{Left: The golden L with the inscribed pentagon and Weierstrass points labeled. Right: The corresponding points labeled on the regular pentagon.}
    \label{fig:goldLWP}
\end{figure}
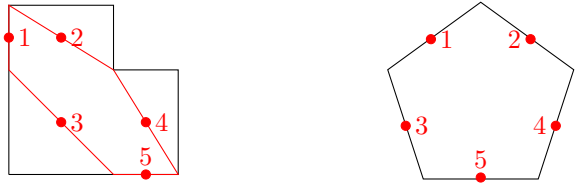

We say a point of a Veech surface is \textit{periodic} if it has finite orbit under action of the Veech group.  The Weierstrass points are the only periodic points of the golden L under action of the Veech group (See \cite[\S 2]{wrightsurv} for review of periodic points on Veech surfaces and L tables).  As a consequence, the action of the Veech group elements over the golden L permutes the points, as seen in Figures \ref{sigma0app} - \ref{sigma3app}.  

\begin{figure}[t]
    \centering
    \begin{subfigure}{0.4\textwidth}
        \centering
        \begin{tikzpicture}[scale=0.86]
            \draw (0,0) -- (1.61803, 0) -- (1.61803+1, 0) -- (2*1.61803+2, 1.61803) -- (2*1.61803+1, 1.61803) -- (3*1.61803+1, 1.61803+1) -- (2*1.61803+1, 1.61803+1) -- (1.61803+1, 1.61803) -- (0,0);
            \draw[red] (1.61803, 0) -- (1.61803+1, 0) -- (2*1.61803+1, 1.61803) -- (2*1.61803+1, 1.61803+1) -- (1.61803+1, 1.61803) -- (1.61803, 0);
            \filldraw[red] (1.61803^2 + 1.61803/2, 1.61803+0.5) circle (2pt) node[anchor=east] {1};
            \filldraw[red] (2*1.61803+1, 1.61803+0.5) circle (2pt) node[anchor=west] {2};
            \filldraw[red] (1.61803+0.5, 1.61803/2) circle (2pt) node[anchor=west] {3};
            \filldraw[red] (1.61803+0.5+1.61803^2/2, 1.61803/2) circle (2pt) node[anchor=west] {4};
            \filldraw[red] (1.61803+0.5, 0) circle (2pt) node[anchor=south] {5};
        \end{tikzpicture}  
    \end{subfigure}%
    \begin{subfigure}{0.4\textwidth}
        \centering
        \begin{tikzpicture}[scale=0.86]
            \draw (1.61803+1, 1.61803) -- (0,0) -- (1.61803, 0) -- (1.61803+1, 0);
            \draw (0, 0) -- (1.61803+1, 1.61803) -- (1.61803, 1.61803);
            \draw (0, 1.61803) -- (1.61803, 1.61803+1) -- (0, 1.61803+1) -- (0, 1.61803);
            \draw (1.61803+1, 0) -- (1.61803+1, 1.61803);
            \draw (0, 0) -- (0, 1.61803+1);
            \draw (1.61803, 1.61803) -- (1.61803, 1.61803+1);
            \draw[red] (1.61803+1, 1.61803) -- (1.61803, 0) -- (1.61803+1, 0);
            \draw[red] (0, 0) -- (1.61803, 1.61803); 
            \draw[red] (0, 1.61803+1) -- (0, 1.61803) -- (1.61803, 1.61803+1);
            \filldraw[red] (1.61803/2, 1.61803+0.5) circle (2pt) node[anchor=west] {1};
            \filldraw[red] (0, 1.61803+0.5) circle (2pt) node[anchor=west] {2};
            \filldraw[red] (1.61803+0.5, 1.61803/2) circle (2pt) node[anchor=west] {3};
            \filldraw[red] (1.61803/2, 1.61803/2) circle (2pt) node[anchor=west] {4};
            \filldraw[red] (1.61803+0.5, 0) circle (2pt) node[anchor=south] {5};
        \end{tikzpicture}  
    \end{subfigure}
    \caption{Image under $\sigma_0$ before and after cut and paste.}
    \label{sigma0app}
\end{figure}
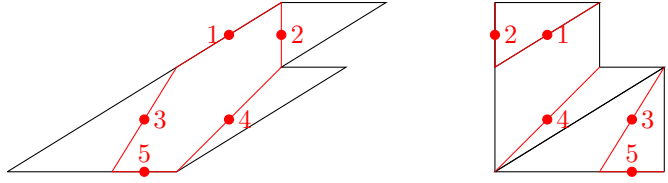
\begin{figure}[t]
    \begin{subfigure}{0.4\textwidth}
        \centering
        \begin{tikzpicture}[scale=0.86]
            \draw (0,0) -- (1.61803+1, 1.61803) -- (2*1.61803+1, 1.61803+1) -- (3*1.61803+2, 2*1.61803+2) -- (2*1.61803+2, 2*1.61803+1) -- (3*1.61803+2, 3*1.61803+1) -- (2*1.61803+1, 2*1.61803+1) -- (1.61803+1, 1.61803+1) -- (0,0);
            \draw [red] (1.61803+1, 1.61803) -- (2*1.61803+1, 1.61803+1) -- (2*1.61803+2, 2*1.61803+1) -- (2*1.61803+1, 2*1.61803+1) -- (1.61803+1, 1.61803+1) -- (1.61803+1, 1.61803);
            \filldraw[red] (1.5*1.61803+1, 1.5*1.61803+1) circle (2pt) node[anchor=west] {1};
            \filldraw[red] (2*1.61803+1.5, 2*1.61803+1) circle (2pt) node[anchor=north] {2};
            \filldraw[red] (1.61803+1, 1.61803+0.5) circle (2pt) node[anchor=west] {3};
            \filldraw[red] (2*1.61803+1.5, 1.5*1.61803+1) circle (2pt) node[anchor=east] {4};
            \filldraw[red] (1.5*1.61803+1, 1.61803+0.5) circle (2pt) node[anchor=south] {5};
        \end{tikzpicture}  
    \end{subfigure}%
    \begin{subfigure}{0.4\textwidth}
        \centering
        \begin{tikzpicture}[scale=0.86]
            \draw (1.61803, 1.61803+1) -- (0, 1); 
            \draw (1.61803+1, 1) -- (1.61803,0) -- (1.61803+1, 1/1.61803); 
            \draw (0, 1/1.61803) -- (1.61803, 1.61803); 
            \draw (0, 1.61803) -- (1.61803, 1.61803+1); 
            \draw (1.61803+1, 1/1.61803) -- (1.61803, 0); 
            \draw (1.61803, 1.61803) -- (0, 0); 
            \draw (0, 1.61803+1) -- (1.61803, 1.61803+1);
            \draw (1.61803+1, 1.61803) -- (1.61803, 1.61803);
            \draw (1.61803+1, 0) -- (0, 0);
            \draw (1.61803, 1.61803) -- (1.61803, 1.61803+1);
            \draw (0, 0) -- (0, 1.61803+1);
            \draw (1.61803+1, 0) -- (1.61803+1, 1.61803);
            \draw[red] (1.61803, 1.61803+1) -- (1.61803, 1.61803);
            \draw[red] (0, 1.61803) -- (1.61803, 1.61803+1);
            \draw[red] (1.61803, 0) -- (1.61803+1, 1.61803) -- (1.61803, 1.61803) -- (0, 0);
            \filldraw[red] (0.5*1.61803, 0.5*1.61803) circle (2pt) node[anchor=west] {1};
            \filldraw[red] (1.61803+0.5, 0) circle (2pt) node[anchor=north] {2};
            \filldraw[red] (0, 1.61803+0.5) circle (2pt) node[anchor=west] {3};
            \filldraw[red] (1.61803+0.5, 0.5*1.61803) circle (2pt) node[anchor=east] {4};
            \filldraw[red] (0.5*1.61803, 1.61803+0.5) circle (2pt) node[anchor=south] {5};
        \end{tikzpicture}  
    \end{subfigure}
    \caption{Image under $\sigma_1$ before and after cut and paste.}
    \label{sigma1app}
\end{figure}
\begin{figure}[t]
    \begin{subfigure}{0.4\textwidth}
        \centering
        \begin{tikzpicture}[scale=0.86]
            \draw (0, 0) -- (1.61803+1, 1.61803+1) -- (2*1.61803+1, 2*1.61803+1) -- (3*1.61803+1, 3*1.61803+2) -- (2*1.61803+1, 2*1.61803+2) -- (2*1.61803+2, 3*1.61803+2) -- (1.61803+1, 2*1.61803+1) -- (1.61803, 1.61803+1) -- (0, 0);
            \draw [red] (1.61803+1, 1.61803+1) -- (2*1.61803+1, 2*1.61803+1) -- (2*1.61803+1, 2*1.61803+2) -- (1.61803+1, 2*1.61803+1) -- (1.61803, 1.61803+1) -- (1.61803+1, 1.61803+1);
            \filldraw[red] (1.61803+0.5, 1.5*1.61803+1) circle (2pt) node[anchor=west] {1};
            \filldraw[red] (1.5*1.61803+1, 2*1.61803+1.5) circle (2pt) node[anchor=north] {2};
            \filldraw[red] (1.61803+0.5, 1.61803+1) circle (2pt) node[anchor=south] {3};
            \filldraw[red] (2*1.61803+1, 2*1.61803+1.5) circle (2pt) node[anchor=east] {4};
            \filldraw[red] (1.5*1.61803+1, 1.5*1.61803+1) circle (2pt) node[anchor=south] {5};
        \end{tikzpicture}  
    \end{subfigure}%
    \begin{subfigure}{0.4\textwidth}
        \centering
        \begin{tikzpicture}[scale=0.86]
            \draw (1.61803+1, 1.61803) -- (1.61803, 0); 
            \draw (1.61803, 1.61803+1) -- (1, 1.61803) -- (2, 1.61803+1);
            \draw (2, 0) -- (1.61803+2, 1.61803); 
            \draw (1, 0) -- (1.61803+1, 1.61803);
            \draw (1, 1.61803) -- (1.61803, 1.61803+1);
            \draw (1.61803, 0) -- (1.61803+1, 1.61803) -- (1, 0); 
            \draw (1.61803+1, 0) -- (1.61803+2, 1.61803) -- (2, 0);
            \draw (2, 1.61803+1) -- (1, 1.61803);
            \draw (1.61803+2, 1.61803) -- (1.61803+1, 0); 
            \draw (1, 0) -- (3, 0); 
            \draw (1.61803+2, 1.61803) -- (1.61803+1, 1.61803);
            \draw (1, 1.61803+1) -- (1.61803+1, 1.61803+1);
            \draw (1,0) -- (1, 1.61803+1);
            \draw (1.61803+2, 0) -- (1.61803+2, 1.61803);
            \draw[red] (1, 0) -- (1.61803+1, 1.61803) -- (1.61803+1, 1.61803+1) -- (1, 1.61803);
            \draw[red] (1.61803+2, 1.61803) -- (1.61803+1, 0) -- (1.61803+2, 0); 
            \filldraw[red] (1.61803+1.5, 0.5*1.61803) circle (2pt) node[anchor=west] {1};
            \filldraw[red] (0.5*1.61803+1, 1.61803+0.5) circle (2pt) node[anchor=north] {2};
            \filldraw[red] (1.61803+1.5, 0) circle (2pt) node[anchor=north] {3};
            \filldraw[red] (1, 1.61803+0.5) circle (2pt) node[anchor=east] {4};
            \filldraw[red] (0.5*1.61803+1, 0.5*1.61803) circle (2pt) node[anchor=south] {5};
        \end{tikzpicture}  
    \end{subfigure}
    \caption{Image under $\sigma_2$ before and after cut and paste.}
    \label{sigma2app}
\end{figure}
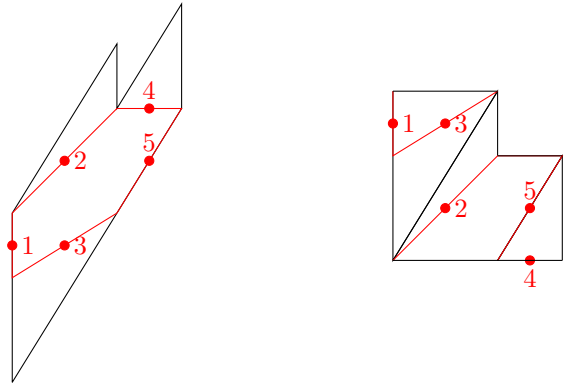
\begin{figure}[t]
    \begin{subfigure}{0.4\textwidth}
        \centering
        \begin{tikzpicture}[scale=0.86]
            \draw (0,0) -- (1.61803, 1.61803+1) -- (1.61803+1, 2*1.61803+1) -- (1.61803+1, 3*1.61803+1) -- (1.61803, 2*1.61803+1) -- (1.61803, 2*1.61803+2) -- (0, 1.61803+1) -- (0, 1.61803) -- (0,0);
            \draw [red] (1.61803, 1.61803+1) -- (1.61803+1, 2*1.61803+1) -- (1.61803, 2*1.61803+1) -- (0, 1.61803+1) -- (0, 1.61803) -- (1.61803, 1.61803+1);
            \filldraw[red] (0, 1.61803+0.5) circle (2pt) node[anchor=west] {1};
            \filldraw[red] (1.61803/2, 1.5*1.61803+1) circle (2pt) node[anchor=west] {2};
            \filldraw[red] (1.61803/2, 1.61803+0.5) circle (2pt) node[anchor=west] {3};
            \filldraw[red] (1.61803+0.5, 2*1.61803+1) circle (2pt) node[anchor=south] {4};
            \filldraw[red] (1.61803+0.5, 1.5*1.61803+1) circle (2pt) node[anchor=south] {5};
        \end{tikzpicture}  
    \end{subfigure}%
    \begin{subfigure}{0.4\textwidth}
        \centering
        \begin{tikzpicture}[scale=0.86]
            \draw (0, 1.61803+1) -- (0, 1.61803) -- (0,0) -- (1.61803, 1.61803+1);
            \draw (1.61803, 0) -- (1.61803+1, 1.61803); 
            \draw (1.61803+1, 0) -- (1.61803+1, 1.61803) -- (1.61803, 0); 
            \draw (1.61803, 1.61803) -- (1.61803, 1.61803+1) -- (0, 0); 
            \draw[red] (1.61803, 0) -- (1.61803+1, 1.61803) -- (1.61803, 1.61803) -- (0, 0);
            \draw[red] (0, 1.61803+1) -- (0, 1.61803) -- (1.61803, 1.61803+1);
            \filldraw[red] (0, 1.61803+0.5) circle (2pt) node[anchor=west] {1};
            \filldraw[red] (1.61803/2, 0.5*1.61803) circle (2pt) node[anchor=west] {2};
            \filldraw[red] (1.61803/2, 1.61803+0.5) circle (2pt) node[anchor=west] {3};
            \filldraw[red] (1.61803+0.5, 0) circle (2pt) node[anchor=north] {4};
            \filldraw[red] (1.61803+0.5, 0.5*1.61803) circle (2pt) node[anchor=south] {5};
            \draw (1.61803, 1.61803) -- (1.61803+1, 1.61803);
            \draw (0, 0) -- (1.61803+1, 0);
            \draw (0, 1.61803+1) -- (1.61803, 1.61803+1);
        \end{tikzpicture}  
    \end{subfigure}
    \caption{Image under $\sigma_3$ before and after cut and paste.}
    \label{sigma3app}
\end{figure}

\begin{lemma} \label{prop:permutation}
For each $\sigma_i$, the permutation of the Weierstrass points (as labeled in Figure \ref{fig:goldLWP}) is a product of disjoint transpositions in the symmetric group $S_5$ given by the following:
\begin{center}
    $\tau_0 = (1\; 2)(3\; 4)$, $\tau_1 = (1\; 3)(2\; 5)$, $\tau_2 = (1\; 4)(3\; 5)$, $\tau_3 = (2\; 3)(4\; 5)$
\end{center}
with $\tau_i$ being the permutation associated with $\sigma_i$.
\end{lemma}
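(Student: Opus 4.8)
The plan is to invoke the fact recalled just above this statement: the five labeled midpoints are the only periodic points of the golden L under its Veech group, so each generator $\sigma_i$, realized as the linear part of an affine automorphism $\psi_i$ of the surface, must permute them. The lemma therefore reduces to computing, for every generator $\sigma_i$ and every Weierstrass point $w_j$, the image $\psi_i(w_j)$ and matching it to one of the five standard midpoints. Using $\phi^2=\phi+1$, the midpoints of the inscribed pentagon have coordinates $w_1=(0,\phi+\tfrac12)$, $w_2=(\tfrac{\phi}{2},\phi+\tfrac12)$, $w_3=(\tfrac{\phi}{2},\tfrac{\phi}{2})$, $w_4=(\phi+\tfrac12,\tfrac{\phi}{2})$, and $w_5=(\phi+\tfrac12,0)$, which is the starting data for the computation.

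To compute a single image $\psi_i(w_j)$ I would first apply the linear map $\sigma_i$ to the vector $w_j$, landing at a point $\sigma_i w_j$ in the sheared copy of the golden L, and then follow the cut-and-paste that carries this sheared copy back onto the standard golden L. Since a cut-and-paste moves each piece by a translation coming from the edge identifications, its net effect on $\sigma_i w_j$ is the addition of a single translation vector determined by which piece of the sheared surface contains the point; these decompositions are precisely the ones drawn in Figures \ref{sigma0app}--\ref{sigma3app}. The resulting point lies in the fundamental L and, because $\psi_i$ must send Weierstrass points to Weierstrass points, coincides exactly with some standard midpoint $w_k$, giving $\psi_i(w_j)=w_k$. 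For example, $\sigma_0 w_1=(\tfrac{3\phi}{2}+1,\,\phi+\tfrac12)$ lies outside the L and is returned by the horizontal translation $(-\phi^2,0)$ to $(\tfrac{\phi}{2},\phi+\tfrac12)=w_2$, so $\psi_0(w_1)=w_2$. Running this over all five points for each of the four generators and reading off the results yields the permutations $\tau_0,\dots,\tau_3$ as claimed.

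The only genuine difficulty is the cut-and-paste bookkeeping: for each image $\sigma_i w_j$ one must correctly determine the piece of the sheared surface containing it, and hence the translation that returns it to the fundamental domain, with particular care for points sitting on identified edges. The figures fix this decomposition unambiguously, so the argument collapses to a finite and routine check. As an independent verification one can observe that each of the four computed permutations is a product of two disjoint transpositions fixing a single point, and that $\tau_0\tau_1$ is a five-cycle; hence the $\tau_i$ generate a copy of the dihedral group $D_5$ in which each $\tau_i$ acts as a reflection of the cyclically ordered pentagon of Weierstrass points. This is exactly the symmetry one expects of the inscribed pentagon and serves as a consistency check on the computation.
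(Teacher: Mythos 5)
Your proposal is correct and takes essentially the same route as the paper: the paper's proof is exactly the finite verification by the cut-and-paste images in Figures \ref{sigma0app}--\ref{sigma3app}, which you carry out explicitly by applying each $\sigma_i$ to the midpoint coordinates and adding the piecewise translation that returns the sheared surface to the standard golden L. Your appeal to the periodic-point fact and the closing $D_5$ consistency check (each $\tau_i$ an involution fixing one point, with products of distinct $\tau_i$ giving $5$-cycles) are sound additions, but the core argument is the same direct computation the paper relies on.
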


\begin{proof}
    This can be verified by the image of the golden L under each $\sigma_i$ after cut and paste, as shown in Figures \ref{sigma0app} - \ref{sigma3app}.
\end{proof}

\begin{remark}
Since $\tau_i$ is a product of disjoint transpositions, it has order 2, so $\tau_i^{-1} = \tau_i$ for all i. As a result, the permutation associated with $\sigma_i^{-1}$ is $\tau_i$.
\label{rem:order}
\end{remark}

\subsection{Long and short cylinders}
Davis and Leli\`evre in \cite{dl} show that both the golden L and double pentagon decompose into exactly $2$ cylinders in any given periodic direction.  In particular, the decomposition gives a \textit{long cylinder} and a \textit{short cylinder}, and the ratios of the circumferences of these two cylinders is always $\phi$ (see \cite[\S 2.1]{dl}).

\section{Statement of algorithm and proof}\label{sec2}
The purpose of this section is to address the following question stated by McMullen \cite{mcmullenQ}:
\begin{question}
Given an edge midpoint and periodic direction on the regular pentagon billiard table, will the trajectory be long, short, or a saddle connection?
\end{question}

We find that we can define an explicit procedure for determining which midpoints will give a short or long trajectory for any periodic direction.  The procedure also shows that starting from one of the midpoints will give a section of a saddle connection. 

\begin{algo} \label{alg:MainAlgorithm}
Given a vector $\vec{v}$ in a periodic direction in the regular pentagon, a starting midpoint on side $J \in (1,2,3,4,5)$, as labeled in Figure \ref{fig:goldLWP}, we can determine if the trajectory is long, short, or a saddle connection through the following steps: 
\begin{enumerate}
    \item Find the tree word $a = k_1 k_2 \dots k_n$ associated with $P^{-1}\vec{v}$.
    \item Compute the product $$\tau := \tau_{k_1} \cdot \tau_{k_{2}} \cdots \tau_{{k_n}} \in S_5.$$
    \item Then,
    \begin{itemize}
        \item If $\tau(J) \in \{1, 2\}$, then the trajectory is short.
        \item If $\tau(J) \in \{3, 4\}$, then the trajectory is long.
        \item If $\tau(J) = 5$, then the trajectory is a saddle connection.
    \end{itemize}  
\end{enumerate}
\end{algo}

\begin{proof}
    Corollary \ref{cor:PinvIsPer} tells us $P^{-1}\vec{v}$ is periodic in the golden L, and Theorem \ref{thm:PerDirec} gives us a corresponding tree word $k_1 k_2 \cdots k_n$ such that $P^{-1}\vec{v} = \ell_{\vec{v}}\sigma_{k_n}\cdots \sigma_{k_2} \sigma_{k_1} \big(\begin{smallmatrix} 1\\ 0 \end{smallmatrix}\big)$ for some length $\ell_{\vec{v}}$. We then apply the following sequence of inverse matrices to both the golden L and the direction vector $P^{-1}\vec{v}$:
    $$\sigma_{k_1}^{-1} \sigma_{k_2}^{-1} \cdots  \sigma_{k_n}^{-1}.$$
    There's two things that happen. First, the direction vector is changed. All the $\sigma_i$'s cancel out, leaving us with a horizontal vector $\ell_{\vec{v}}\big(\begin{smallmatrix} 1\\ 0 \end{smallmatrix}\big)$.
    
    Secondly, the golden L is acted on by this product. Recall $\sigma_i$ is in the Veech group of the golden L, so $\sigma_i^{-1}$ must also be in the Veech group. Then by closure, the product above must also be in the Veech group. As a result, we may perform cut and paste operations and get back to the golden L. Moreover, by Proposition \ref{prop:permutation} and Remark \ref{rem:order}, the labeled midpoints (the Weierstrass points) will be permuted by the product $\tau := \tau_{k_1} \cdot \tau_{k_{2}} \cdots \tau_{{k_n}}$. Thus, midpoint $J$ will be sent to midpoint $\tau(J)$. We then have a horizontal direction originating from midpoint $\tau(J)$ on the golden L. Figure \ref{horizL} shows us clearly if midpoint $\tau(J)$ is in a long cylinder, short cylinder, or on a saddle connection in the horizontal direction. Finally, we use Lemma \ref{lem:LongShortConnec} to jump back to regular pentagon. The result follows immediately.
\end{proof}

\begin{figure}[h]
    \centering
    \begin{tikzpicture}[scale=1.45]
        \draw (0,0) -- (1.61803, 0) -- (1.61803^2, 0) -- (1.61803^2, 1.61803) -- (1.61803, 1.61803) -- (1.61803, 1.61803^2) -- (0, 1.61803^2) -- (0, 1.61803) -- (0,0);
        \draw [red] (1.61803, 0) -- (1.61803^2, 0) -- (1.61803, 1.61803) -- (0, 1.61803^2) -- (0, 1.61803) -- (1.61803, 0);
        \filldraw[red] (0, 1.61803+0.5) circle (2pt) node[anchor=south east] {1};
        \filldraw[red] (1.61803/2, 1.61803+0.5) circle (2pt) node[anchor=south west] {2};
        \filldraw[red] (1.61803/2, 1.61803/2) circle (2pt) node[anchor=south west] {3};
        \filldraw[red] (1.61803+0.5, 1.61803/2) circle (2pt) node[anchor=south west] {4};
        \filldraw[red] (1.61803+0.5, 0) circle (2pt) node[anchor=south] {5};
        \draw [ultra thick, <->] (0, 1.61803/2) -- (1.61803^2, 1.61803/2);
        \draw [ultra thick, <->] (0, 1.61803 + 1/2) -- (1.61803, 1.61803 + 1/2);
        \draw [ultra thick, <->] (1.61803, 0) -- (1.61803^2, 0);
    \end{tikzpicture}  
    \caption{A short trajectory (top), long trajectory (middle), and saddle connection (bottom) in the horizontal direction in the golden L.}
    \label{horizL}
\end{figure}
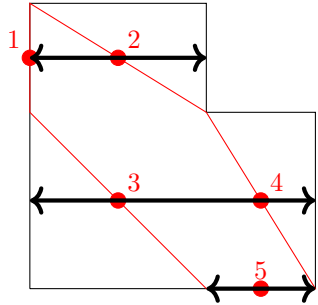

In the following Example, we illustrate the algorithm using the tree word $21$:

\begin{example}
	Consider the direction $21$ in the regular pentagon. The algorithm tells us to evaluate
    $$\tau = \tau_2 \cdot \tau_1 =(1\; 4)(3\; 5) \cdot (1\; 3)(2\; 5) = (1\; 5 \; 2\; 3\; 4).$$
    Then $\tau(1) = 5$, $\tau(2) = 3$, $\tau(3)=4$, $\tau(4)=1$, and $\tau(5)=2$. Thus, the algorithm tells us that midpoints 4 and 5 are in the short cylinder, midpoints 2 and 3 are in the long cylinder, and midpoint 1 is a saddle connection. To check this, we can draw in exactly what the trajectories look like in the regular pentagon. We first find the direction vector $\vec{v}$ in the golden L that corresponds to the tree word $21$ (see Example \ref{ex:treeword}). That turns out to be $\vec{v} = (2\phi + 2, 2 + 1)$. We then calculate the corresponding direction on the regular pentagon:
    $$P\vec{v} = \begin{pmatrix} 1 & \cos{\pi/5} \\ 0 & \sin{\pi/5} \end{pmatrix} \cdot \begin{pmatrix} 2\phi + 2 \\ 2 + 1 \end{pmatrix} \approx \begin{pmatrix} 8.66 \\ 2.49 \end{pmatrix}.$$
    We can then draw in the trajectory using the standard rules of billiards. The long and short trajectories can be seen in Figure \ref{fig:LongShortEx}, along with the midpoints the trajectories hit. 
\end{example}

\begin{figure}
    \centering
    \begin{subfigure}{0.45\textwidth}
        \centering
        \includegraphics[width=0.95\textwidth]{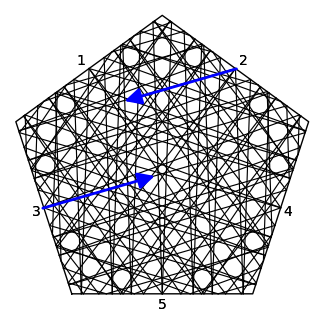}
    \end{subfigure}
    \begin{subfigure}{0.45\textwidth}
    \centering
        \includegraphics[width=0.95\textwidth]{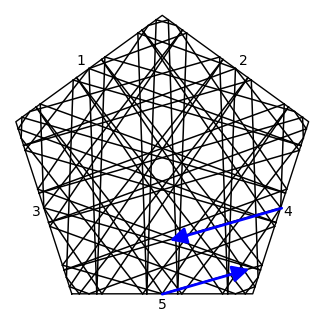}
    \end{subfigure}
    \caption{A long trajectory (left) and a short trajectory (right) in the direction $21$.}
    \label{fig:LongShortEx}
\end{figure}

This problem may be extended in a number of interesting directions.  One interesting question is the following.
\begin{question}
    How does this algorithm generalize to regular $(2n+1)$-gons?
\end{question}
We remark that this method only readily generalizes to surfaces with periodic points under the Veech group that are exactly the Weierstrass points of the surface, which are also the midpoints of edges of the regular polygon.

\section{Simplifying Tree Words}\label{sec3}
For a given finite tree word $a = k_1k_2 \dots k_n$, we define the \textit{derived word} $a'$ to be $a$ with the removal of repeated pairs of numbers. That is, if $k_i=k_{i+1}$, then both $k_i$ and $k_{i+1}$ will be removed. We say $a$ is a \textit{base word} if $a=a'$. Thus, every tree word $a$ has a base word, and that base word can be reached by deriving $a$ until there are no more pairs of numbers.

Let $\Omega$ be the space of all possible tree words. Let $\langle e \rangle$ denote the empty word. Define $\mu : \Omega \to \Omega$ to be the function that takes a tree word to its base word. 

\begin{example}
    Consider the tree word $a=231221$. Then $a'=2311$. We may derive again and get $a''=23$. Notice $a''$ does not have any pairs of numbers, so it cannot be derived further. Thus, the base word for $a=231221$ is $a''=23$. Furthermore, we write $\mu(231221)=23$.
\end{example}

We now state a corollary to our main theorem that may help reduce the calculations needed for determining the long and short cylinder decomposition for long tree words.
\begin{corollary}
    For any given tree word $a$, the corresponding base word $\mu(a)$ and $a$ have the same midpoints in the long cylinder, short cylinder, and on a saddle connection.
\end{corollary}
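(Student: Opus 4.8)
The plan is to reduce everything to the single observation that the output of Algorithm~\ref{alg:MainAlgorithm} depends on the tree word $a = k_1 k_2 \cdots k_n$ only through the permutation $\tau = \tau_{k_1}\tau_{k_2}\cdots\tau_{k_n} \in S_5$, and then to check that this permutation is invariant under the deriving operation $a \mapsto a'$.

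First I would make precise what the corollary is really claiming. For a fixed starting midpoint $J$, whether the trajectory is long, short, or a saddle connection is decided entirely by the value $\tau(J)$ through the trichotomy in Step~(3) of the algorithm. Consequently, any two tree words producing the same permutation $\tau$ assign all five midpoints to the same cylinders (or saddle connections). So it suffices to prove that $a$ and its base word $\mu(a)$ yield the same permutation.

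The key step is that each $\tau_i$ is an involution: by Remark~\ref{rem:order} it is a product of disjoint transpositions, hence $\tau_i^2 = e$. Thus if $a$ contains a repeated pair $k_i = k_{i+1}$, the corresponding adjacent factors in the product satisfy $\tau_{k_i}\tau_{k_{i+1}} = \tau_{k_i}^2 = e$ and cancel; deleting that pair to form $a'$ leaves the product $\tau$ unchanged. Applying this repeatedly---an induction on the number of derivation steps needed to reach $\mu(a)$---shows that $a$ and $\mu(a)$ have the same associated permutation, and the corollary follows.

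The point that most deserves emphasis is that this is genuinely a statement about the discrete labeling data and not about the trajectories themselves. The matrices $\sigma_i$ are \emph{not} involutions (for instance $\sigma_0^2 \neq I$), so removing a repeated pair changes the periodic direction $\vec{v}$ on the golden L, and the long and short cylinders for $a$ and $\mu(a)$ are different geometric objects. What survives is the image under the homomorphism from the Veech group to the permutation group of the Weierstrass points, where each generator $\sigma_i$ maps to an order-two element $\tau_i$; it is precisely this collapse of information that makes the cylinder classification, but not the direction, invariant under deriving.
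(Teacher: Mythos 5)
Your proof is correct and is essentially the same as the paper's: both rest on the cancellation $\tau_{k_i}\tau_{k_{i+1}} = e$ for a repeated pair (via Remark~\ref{rem:order}), iterated until the base word is reached. Your closing observation---that the $\sigma_i$ are not involutions, so the direction and hence the cylinders themselves change while only the permutation-level data is invariant---is a worthwhile clarification of what the corollary actually asserts, but it does not alter the argument.
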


\begin{proof}
    Consider a tree word with a pair of numbers $k_i = k_{i+1}$. By our algorithm, we then consider the product $\tau= \cdots \tau_{k_i}\tau_{k_{i+1}} \cdots $. But by Remark \ref{rem:order}, $\tau_{k_i}\tau_{k_{i+1}}=(1)$, the identity element of $S_5$. Thus, this pair of numbers in our tree word cancel out, resulting in the derived tree word having the same midpoints in the same cylinders as our original tree word. This may then be repeated until we reach the base word, proving the corollary.
\end{proof}

This has an interesting consequence. That is, when we run the algorithm on a tree word, we may first reduce it down to its base word and then run the algorithm on that base word. This will the same result as if we ran it on the original word. As a result, for any tree word whose base word is $\langle e \rangle$, it is immediate that the midpoints $1,2$ lie in the short cylinder, midpoints $3,4$ lie in the long cylinder, and midpoint $5$ lies on a saddle connection. Thus, we ask the following question:

\begin{question}
    Given a tree word of length $2n$, what is the probability the base word will be the empty word $\langle e \rangle$, i.e., what is the probability that $\mu(a)=\langle e \rangle$ for a given tree word $a$ of length $2n$?
\end{question}

\subsection*{Acknowledgements}
We would like to thank Diana Davis, Samuel Leli\`evre, Jane Wang, and Sunrose Shrestha for helping us work on this problem in the Summer@ICERM 2021 REU. The idea for this algorithm was presented to us by Leli\`evre, who continually helped us understand and explore this problem.

\printbibliography
\end{document}